\definecolor{webgreen}{rgb}{0,.5,0}
\definecolor{webbrown}{rgb}{.6,0,0}
\newcommand{\seqnum}[1]{\href{http://oeis.org/#1}{\underline{#1}}}
\def\OOO{\mathcal{O}}
\def\PPP{\mathcal{P}}
\def\NN{\mathbb{N}}
\def\RR{\mathbb{R}}
\def\ZZ{\mathbb{Z}}
\begin{document}

\begin{center}
\epsfxsize=4in
%\leavevmode\epsffile{logo129.eps}
\end{center}

\theoremstyle{plain}
\newtheorem{proposition}{Proposition}

\begin{center}
\vskip 1cm{\LARGE\bf 
Implicit Divided Differences,\\
Little Schr\"oder Numbers,\\
and Catalan Numbers}
\vskip 1cm
\large
Georg Muntingh \\
Centre of Mathematics for Applications/\\
Department of Mathematics\\
University of Oslo\\
P.O. Box 1053, Blindern\\
N-0316, Oslo\\
Norway\\

\href{mailto:georgmu@math.uio.no}{\tt georgmu@math.uio.no}\\
\end{center}

\vskip .2 in

\begin{abstract}
Under general conditions, the equation $g(x,y) = 0$ implicitly defines $y$ locally as a function of $x$. In this short note we study the combinatorial structure underlying a recently discovered formula for the divided differences of $y$ expressed in terms of bivariate divided differences of $g$, by analyzing the number of terms $a_n$ in this formula. The main result describes six equivalent characterizations of the sequence $\{a_n\}$.
\end{abstract}

\section{Introduction}
The act of taking differences and dividing is a recursive process with a wide range of applications in mathematics. In approximation theory, such divided differences can be viewed as a discrete analogue of derivatives; see the survey by de Boor \cite{Boor05}. Generalizations of univariate divided differences play a role in the theory of orthogonal polynomials, Macdonald polynomials in particular, and are used to define recurrences between Schubert polynomials in the algebraic geometry of flag varieties \cite{Lascoux03}.

Although a classical topic dating back to Newton, interest in divided differences seems to have been revived recently. One point of view defines multivariate divided differences as the leading term of an interpolating polynomial and attempts to carry over properties from the univariate case \cite{Rabut00, Sauer07}. In addition, several formulas from (differential) calculus have been generalized to a `divided difference calculus', including several new chain rules \cite{Floater10, FloaterLyche07, WangWang06} akin to Fa\`a di Bruno's formula, and expressions for divided differences of inverse functions \cite{FloaterLyche08} and implicit functions \cite{MuntinghFloater11, Muntingh12}.

It is the goal of this paper to study the combinatorial structure underlying a formula by Muntingh and Floater \cite{MuntinghFloater11}, restated below as \eqref{eq:main}, for the divided differences of a function $y$ implicitly defined by a relation $g$. More precisely, for some open intervals $U, V\subset\RR$, let $y:U\longrightarrow V$ be a function that is implicitly defined by a function $g: U\times V\longrightarrow \RR$ via
\begin{equation}\label{eq:ImplicitlyDefined}
g\big(x,y(x)\big) = 0,\qquad
\frac{\partial g}{\partial y}\big(x,y(x)\big)\neq 0
\qquad \forall\ x\in U.
\end{equation}
Using a chain rule for divided differences, this relation induces relations between the divided differences of $y$ and those of $g$ and in particular yields \eqref{eq:main}. In this paper we study the number of terms $a_n$ in this formula.

In the following section, we recall the definition of a divided difference, a recurrence relation for the divided differences of $y$, and the explicit formula \eqref{eq:main}. Next, in Section \ref{sec:NumberOfTerms}, we first derive a generating function from the recurrence relation and then prove six equivalent characterizations of the sequence $\{a_n\}$. We end with an analysis of the asymptotic behaviour of $\{a_n\}$.

\section{Divided Differences}\label{sec:DividedDifferences}
Let $[x_0,\ldots,x_n]f$ denote the \emph{divided difference [of order $n$]} of a function
$f:(a, b)\longrightarrow\RR$ at the distinct points $x_0,\ldots,x_n \in (a,b)$, which is recursively defined by
$[x_0]f := f(x_0)$ and
\[ [x_0,\ldots, x_n]f =
  \frac{[x_1,\ldots, x_n]f - [x_0,\ldots, x_{n-1}]f}{x_n - x_0}
  \qquad \textup{if}~n>0. \]
For given indices $i_0,i_1,\ldots,i_k$ satisfying $i_0 < i_1 < \cdots < i_k$, we shall shorten notation to
$[i_0i_1\cdots i_k]f := [x_{i_0},x_{i_1},\ldots x_{i_k}]f$.

The above definitions generalize to bivariate divided differences as follows.
This time, let $f: U\longrightarrow \RR$ be defined on some rectangle
\[ U = (a_1,b_1)\times (a_2,b_2)\subset \RR^2.\]
Suppose we are given integers $m, n \ge 0$,
distinct points $x_0, \ldots, x_m \in (a_1, b_1)$,
and distinct points $y_0, \ldots, y_m \in (a_2, b_2)$.
The Cartesian product 
\[ \{ x_0, \ldots, x_m \} \times \{ y_0, \ldots, y_n \} \]
defines a rectangular grid of points in $U$.
The \emph{[bivariate] divided difference} of $f$ at this grid,
denoted by
\begin{equation}\label{eq:bivdd}
 [ x_0, \ldots, x_m; y_0, \ldots, y_n ]f,
\end{equation}
can be defined recursively as follows.
If $m = n = 0$, the grid consists of only one point
$(x_0,y_0)$, and we define
$[x_0;y_0]f := f(x_0,y_0)$
as the value of $f$ at this point.
In case $m > 0$, we can define \eqref{eq:bivdd} as
\[ \frac{[ x_1, \ldots, x_m; y_0, \ldots, y_n ]f - [ x_0, \ldots, x_{m-1}; y_0, \ldots, y_n ]f}{x_m - x_0}, \]
or if $n > 0$, as
\[ \frac{[ x_0, \ldots, x_m; y_1, \ldots, y_n ]f - [ x_0, \ldots, x_m; y_0, \ldots, y_{n-1} ]f}{y_n - y_0}. \]
If both $m>0$ and $n>0$
the divided difference \eqref{eq:bivdd} is uniquely defined by either
recurrence relation. Similarly to the univariate case, we shorten the notation for bivariate divided differences to
\[ [ i_0i_1\cdots i_s; j_0 j_1 \cdots j_t]f
:= [x_{i_0},x_{i_1},\ldots,x_{i_s};y_{j_0},y_{j_1},\ldots,y_{j_t}]f. \]

In what follows, assume that $y$ and $g$ are related by \eqref{eq:ImplicitlyDefined}. Applying a chain rule for divided differences, one can derive \cite{MuntinghFloater11}
\begin{equation}\label{eq:RecursionFormulaZ}
[01]y = - \frac{[01;1]g}{[0;01]g},
\end{equation}
and for any $n\geq 2$ the recurrence relation
\begin{equation}\label{eq:RecursionFormulaA}
[01\cdots n]y =  - \sum_{k = 2}^n\ \sum_{0 = i_0 < \cdots < i_k = n}\ \sum_{\substack{s = 0\\s = i_s - i_0}}^k \frac{[01\cdots s;i_s i_{s+1}\cdots i_k]g}{[0;0n]g}\!\prod_{l = s + 1}^k [i_{l-1} (i_{l-1} + 1)\cdots i_l]y.
\end{equation}
The third summation might look a bit mysterious. It is a concise way of describing all $s$ for which the increasing sequence $0 = i_0 < i_1 < \cdots < i_k = n$ starts with at least $s$ steps of 1, i.e., $i_1 - i_0 = \cdots = i_s - i_{s-1} = 1$.

\begin{figure}[t]
\begin{center}
\includegraphics[scale=0.815]{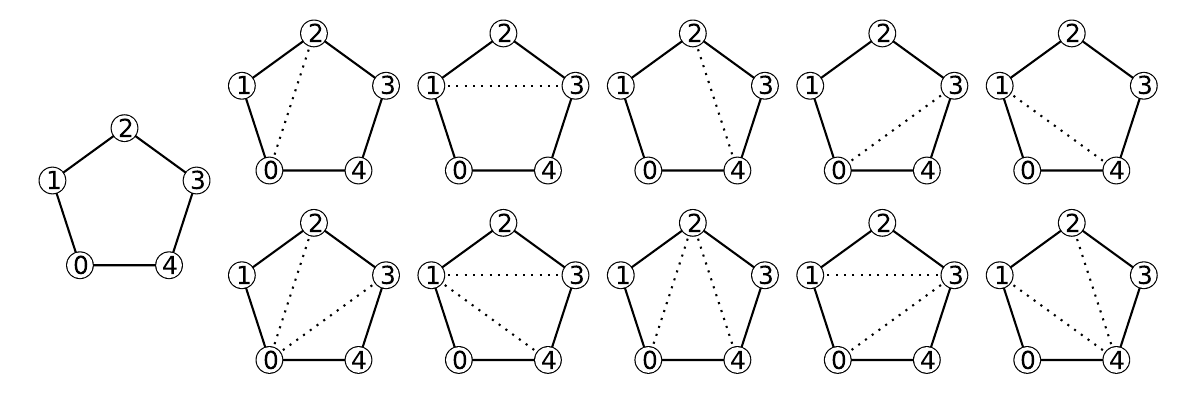}
\end{center}
\caption{The dissections of a convex polygon with vertices labeled $0, 1, 2, 3, 4$. The bottom-right dissection has faces $(0,1,4)$, $(1,2,4)$, and $(2,3,4)$.}\label{fig:PolygonPartitions}
\end{figure}

For $n\geq 2$, this recurrence relation was ``solved'' to yield the explicit formula \eqref{eq:main}. In order to describe this formula, we introduce some notation for dissections of a convex polygon. With a sequence of labels $0,1,\ldots,n$ we associate the ordered vertices of a convex polygon. A \emph{dissection of a convex polygon} is the result of connecting any pairs of nonadjacent vertices with straight line segments, none of which intersect. We denote the set of all such dissections of a polygon with vertices $0,1,\ldots,n$ by $\PPP(0,1,\ldots,n)$. The vertices $0, 1, \ldots, n$ and line segments (between either adjacent or nonadjacent vertices) form the vertices and edges of a plane graph. As such, every dissection $\pi\in \PPP(0, 1,\ldots, n)$ is described by its set $F(\pi)$ of \emph{[oriented] faces}, which does not include the unbounded face. Each face $f\in F(\pi)$ is represented by a subsequence $f = (i_0, i_1,\ldots, i_k)$ of the sequence $(0, 1, \ldots, n)$ of length at least three.

See Figure \ref{fig:PolygonPartitions} for the $11$ dissections of a convex polygon with vertices $0, 1, 2, 3, 4$. For general $n$, this number is given by the \emph{little Schr\"oder number} \cite{Stanley97} (or \emph{super-Catalan number})
\[\frac{1}{n-1} \sum_{k=1}^{n-1} {n-1\choose k}{n-1+k\choose k-1},\]
which form sequence \seqnum{A001003} in The On-Line Encyclopedia for Integer Sequences (OEIS)~\cite{Sloane12}.

The main result of \cite{MuntinghFloater11} is the formula
\begin{equation}\label{eq:main}
[01\cdots n]y = \sum_{\pi\in \PPP(0, \ldots,n)} \prod_{(i_0, \ldots,i_k)\in F(\pi)} \sum_{\substack{s = 0 \\s = i_s - i_0}}^k - \frac{[i_0 \cdots i_s;i_s \cdots i_k]g}{[i_0; i_0 i_k]g} \!\prod_{\substack{l = s + 1\\ i_l - i_{l-1} = 1}}^k \!\left( - \frac{ [i_{l-1} i_l; i_l]g}{[i_{l-1}; i_{l-1} i_l]g} \right),
\end{equation}
which expresses divided differences of $y$ solely in terms of bivariate divided differences of $g$. In a follow-up paper, this formula was generalized to a formula for divided differences of \emph{multivariate} implicit functions \cite{Muntingh11, Muntingh12}.

\section{The Number of Terms}\label{sec:NumberOfTerms}
For any implicit divided differences $[01\cdots n]y$ of any order $n\geq 2$, we wish to analyze the number of terms $a_n$ in the right hand side of Equation \eqref{eq:main}. Equation \eqref{eq:RecursionFormulaZ} yields the natural extension $a_1 = 1$. With the additional convention $a_0 = 1$, we can collect these integers into a sequence $\{a_n\}$. The sequences in this paper will always start at $n = 0$, unless stated otherwise. Alternatively, we can encode the integers $a_n$ by an \emph{[ordinary] generating function}, which is the formal power series $\sum_{n=0}^\infty a_n x^n$. Table \ref{tab:NumberOfTerms} lists $a_n$ for small values of $n$, and more terms can be found as sequence \seqnum{A162326} in The OEIS. 

\begin{table}[t]
\begin{center}
\begin{tabular}{cccccccccccccccccccccccccccc}
\hline\hline
  $n$ & : & 0 & 1 & 2 &  3 &  4 &   5 &    6 &     7 &      8 &       9 &      10 &       11\\
  \hline
$a_n$ & : & 1 & 1 & 3 & 13 & 71 & 441 & 2955 & 20805 & 151695 & 1135345 & 8671763 & 67320573\\
\hline\hline
\end{tabular}
\end{center}
\caption{The number of terms $a_n$ in the right hand side of Equation \eqref{eq:main} for small values of $n$, with the additional convention $a_0 = a_1 = 1$.}\label{tab:NumberOfTerms}
\end{table}

To analyze $\{a_n\}$, we rewrite \eqref{eq:RecursionFormulaA} in terms of the more standard notion of a composition. Let $\NN = \{1, 2, \ldots \}$ denote the set of positive integers. A \emph{composition [of $n$ of length $k$]} is any sequence $(j_1,\ldots,j_k)\in \NN^k$ satisfying $j_1 + \cdots + j_k = n$. The positive integers $j_1,\ldots,j_k$ are called the \emph{parts} of the composition. Clearly these correspond to increasing sequences $0 = i_0 < \cdots < i_k = n$ via $j_l = i_l - i_{l-1}$ for $l = 1, \ldots, k$. Thus \eqref{eq:RecursionFormulaA} is equivalent to
\begin{equation}\label{eq:RecursionFormulaB}
[01\cdots n]y =  - \sum_{k = 2}^n\ \sum_{\substack{\text{compositions}\\ j_1 + \cdots + j_k = n}}\ \sum_{\substack{s = 0\\s = j_1 + \cdots + j_s}}^k
\end{equation}
\[ \hfill \frac{\big[01\cdots s;(s + j_{s+1})\cdots n\big]g}{[0;0n]g}\!\prod_{l = s + 1}^k \big[(j_1 + \cdots + j_{l-1})\cdots (j_1 + \cdots + j_l)\big]y. \]

\begin{proposition}\label{prop:GeneratingFunction}
The sequence $\{a_n\}$ has associated generating function 
\begin{equation}\label{eq:GeneratingFunction}
G(x) := \sum_{n=0}^\infty a_n x^n = 
\frac54 - \frac14 \sqrt{\frac{1 - 9x}{1 - x}}.
\end{equation}
\end{proposition}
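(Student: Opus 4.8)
The plan is to extract from the recurrence \eqref{eq:RecursionFormulaB} a purely numerical recurrence for the counts $a_n$, convert it into a functional equation for $G$ via the standard dictionary between compositions and power series, and solve the resulting quadratic.

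First I would read off the recurrence for $a_n$. When the right-hand side of \eqref{eq:RecursionFormulaB} is fully expanded, each factor of the form $[(j_1+\cdots+j_{l-1})\cdots(j_1+\cdots+j_l)]y$ is an implicit divided difference of order $j_l$, and so---since the number of terms depends only on the order---it contributes $a_{j_l}$ terms upon recursive expansion (here $a_1 = 1$ reflects that a first-order divided difference is a single term). The innermost sum over $s$ is indexed by those $s$ with $s = j_1 + \cdots + j_s$, which forces $j_1 = \cdots = j_s = 1$; hence its range $\{0,1,\ldots,p\}$ has size $p+1$, where $p = p(j)$ denotes the number of leading parts equal to $1$. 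Because the factors dropped by the product $\prod_{l=s+1}^k$ all have order $1$ and thus equal $a_1 = 1$, each valid $s$ yields the same count $\prod_{l=1}^k a_{j_l}$. Collecting these observations, for $n \ge 2$,
\[
a_n = \sum_{\substack{(j_1,\ldots,j_k)\,\models\, n\\ k\ge 2}} \bigl(p(j)+1\bigr)\prod_{l=1}^k a_{j_l},
\]
where $\models$ means ``is a composition of''. One should check this reproduces $a_2 = 3$, $a_3 = 13$, $a_4 = 71$ against Table \ref{tab:NumberOfTerms}; it does. The recurrence is well founded because $k \ge 2$ forces every part $j_l < n$.

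Next I would pass to generating functions. Writing $A(x) := G(x) - 1 = \sum_{n\ge 1} a_n x^n$ and using that a composition into $k$ ordered parts weighted by $\prod a_{j_l}$ has generating function $A(x)^k$, I split the weight $p(j)+1 = 1 + p(j)$ into two contributions. The constant $1$ sums to $\sum_{k\ge 2} A^k = A^2/(1-A)$, the generating function of all compositions with at least two parts. For the term $p(j)$, I would write $p(j) = \sum_{i\ge 1}\mathbf{1}[\,j_1=\cdots=j_i=1\,]$: fixing $i$, the first $i$ parts contribute $a_1^i x^i = x^i$ and the remaining parts form an arbitrary (possibly empty) composition with generating function $1/(1-A)$, so summing over $i \ge 1$ gives $\tfrac{x}{1-x}\cdot\tfrac{1}{1-A}$; subtracting the single $k=1$ term (the composition $(1)$) corrects for the constraint $k \ge 2$. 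Since $\sum_{n\ge 2} a_n x^n = A - x$, this yields the functional equation
\[
A = \frac{A^2}{1-A} + \frac{x}{(1-x)(1-A)}.
\]

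Finally I would clear denominators by multiplying through by $1 - A$, which collapses the equation to the quadratic $2A^2 - A + \tfrac{x}{1-x} = 0$. Solving and simplifying the discriminant via $1 - \tfrac{8x}{1-x} = \tfrac{1-9x}{1-x}$ gives $A = \tfrac14\bigl(1 \pm \sqrt{(1-9x)/(1-x)}\bigr)$; the condition $A(0) = 0$ forces the minus sign, and $G = 1 + A$ produces \eqref{eq:GeneratingFunction}. I expect the main obstacle to be the first step: correctly identifying the weight $p(j)+1$ attached to each composition from the somewhat opaque innermost summation of \eqref{eq:RecursionFormulaB}, and then encoding ``number of leading ones'' as a generating function together with the boundary bookkeeping forced by the conventions $a_0 = a_1 = 1$ and the constraint $k \ge 2$. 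Once the recurrence is pinned down, the remaining manipulations are routine.
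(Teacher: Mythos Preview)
Your proposal is correct and follows essentially the same route as the paper: both extract the numerical recurrence $a_n = \sum_{k\ge 2}\sum_{j\models n}(p(j)+1)\prod a_{j_l}$ from \eqref{eq:RecursionFormulaB}, translate it into a quadratic functional equation for the generating function, and select the branch with the right constant term. The only cosmetic difference is bookkeeping---the paper reindexes by the number $t$ of stripped-off leading ones to get $2G^2-5G+2+\tfrac{1}{1-x}=0$, while you split $p(j)+1=1+p(j)$ and work with $A=G-1$ to reach the equivalent $2A^2-A+\tfrac{x}{1-x}=0$; the cancellation of the $-x$ correction on both sides is handled correctly.
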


\begin{proof}
Replacing, in Equation \eqref{eq:RecursionFormulaB}, the divided differences of $y$ of order $n$ by the number of terms $a_n$ yields
\[ a_n = \sum_{k=2}^n\ \sum_{\substack{\text{compositions} \\j_1 + \cdots + j_k = n}}\ \sum_{\substack{s=0\\ s = j_1 + \cdots + j_s}}^k\ \prod_{l=1}^k a_{j_l}, \qquad n\geq 2. \]
The third sum makes sure that any composition starting with precisely $s$ ones will be counted $s+1$ times. We can, therefore, replace this sum by a sum over the number $t$ of initial parts in the composition that are one. Using that $a_1 = 1$, we find
\[ a_n = 1 + \sum_{k=2}^n \ \sum_{t=0}^{k-1} \ \sum_{\substack{\text{compositions}\\j_1 + \cdots + j_{k-t} = n-t}} \prod_{l = 1}^{k-t} a_{j_l}, \qquad n\geq 2,\]
where the first term, 1, is included to make sure that the composition $n = 1 + 1 + \cdots + 1$ is counted $n+1$ times. The generating function $G$ will therefore satisfy
\begin{align*}
G & = \frac{1}{1 - x} + \sum_{k = 2}^\infty \sum_{t = 0}^{k - 1} x^t (G - 1)^{k-t}\\
  & = \frac{1}{1 - x} + (G - 1)\sum_{k = 1}^\infty \sum_{t = 0}^k x^t (G - 1)^{k-t}\\
  & = \frac{1}{1 - x} + (G - 1)\left[\frac{1}{1-x} \frac{1}{2 - G} - 1\right].
\end{align*}
Multiplying by $(1 - x)(2 - G)$ and rewriting gives
\begin{equation}\label{eq:MinimumPolynomial}
2G^2 - 5G + 2 + \frac{1}{1 - x} = 0.
\end{equation}
Since $G(0) = 1$, we find that $G$ is as in the Proposition.
\end{proof}

Up to a constant and a rescaling, the generating function $G(x)$ is exactly the magnetization $M_\infty(s)$ for the XYZ spin chain along a special line of couplings \cite{FendleyHagendorf10}.

Before we state the next proposition, we need to recall some standard notions. The \emph{Gaussian hypergeometric series [with parameters $a,b,c$]} is defined by 
\[_2F_1(a,b;c;z) = \sum_{n=0}^\infty \frac{(a)_n(b)_n}{(c)_n} \, \frac{z^n}{n!},\]
where the \emph{Pochhammer symbol} $(\cdot)_n$ is defined by
\[ (a)_n =
\left\{
 \begin{array}{ll}
  1,                     & \text{if } n = 0; \\
  a(a+1) \cdots (a+n-1), & \text{if } n > 0.
 \end{array}
\right.
\]
Hypergeometric series with adjacent parameter values are related \cite[Section 15.2.11]{AbramowitzStegun64} by Gauss' contiguous relation
\begin{equation}\label{eq:HypergeometricIdentity}
(c-b) _2F_1(a,b-1;c;z) + (2b - c + (a - b)z) _2F_1(a,b;c;z) + b(z-1) _2F_1(a,b+1;c;z) = 0.
\end{equation}
Moreover, as explained by Roy \cite{Roy87}, sums of products of binomial coefficients can often be reduced to hypergeometric series, with the help of elementary identities like
\begin{equation}\label{eq:PochhammerIdentities}
(a)_{2k} = 2^{2k} \left(\frac{a}{2}\right)_k \left(\frac{a+1}{2}\right)_k,\qquad \frac{n!}{(n-k)!} = (-1)^k (-n)_k.
\end{equation}

Given a sequence $\{a_n\}$, define its \emph{binomial transform} $\{b_n\}$ by 
\[ b_n = \sum_{k=0}^n {n\choose k} a_k,\qquad n\geq 0. \]
It is well known that if $A(x)$ is the generating function of an integer sequence $\{a_n\}$, then \[ \frac{1}{1-x}A\left(\frac{x}{1-x}\right)\]
is the generating function of its binomial transform. Although sometimes proved in the case that $A(x)$ is analytic \cite[Appendix]{Boyadzhiev09}, the statement holds for any formal power series $A(x)$~\cite{Gould90}.

For any $n\geq 0$, let
\[ C_n := \frac{1}{n+1}{2n\choose n}\]
denote the \emph{$n$-th Catalan number}, which form sequence \seqnum{A000108} in The OEIS. It is well known \cite{LarcombeWilson01} that the sequence $\{C_n\}$ has generating function
\begin{equation}\label{eq:CatalanGeneratingFunction}
F(x) := \frac{1 - \sqrt{1 - 4x}}{2x}.
\end{equation}
By means of the binomial transform, we shall relate $\{a_n\}$ to $\{2^n C_n\}$, which form sequence \seqnum{A151374} in The OEIS.

\begin{proposition}\label{thm:MainProposition}
With the convention $a_0 = a_1 = 1$, the following statements are equivalent and hold.
\begin{enumerate}
\item[(a)] The number of terms in the r.h.s. of \eqref{eq:main} is $a_n$ for any $n\geq 2$;
\item[(b)] The sequence $\{a_n\}$ has generating function given by \eqref{eq:GeneratingFunction};
\item[(c)] The integers $a_n$ satisfy the quadratic recurrence relation
\[ a_n = 1 + 2\sum_{m=1}^{n-1} a_m a_{n-m},\qquad n \geq 2;\]
\item[(d)] The integers $a_n$ satisfy the linear recurrence relation
\[ na_n = (-14 + 10n)a_{n-1} + (18 - 9n)a_{n-2},\qquad n \geq 2;\]
\item[(e)] One has $\displaystyle a_n = \,_2F_1\left(\frac 1 2, 1-n; 2; -8\right)$ for any $n\geq 2$;
\item[(f)] The sequence $\{a_{n+1}\}_{n\geq 0}$ is the binomial transform of $\big\{2^n C_n\big\}_{n\geq 0}$.
\end{enumerate}
\end{proposition}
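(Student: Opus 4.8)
The plan is to treat statement (b) as the hub. Since Proposition \ref{prop:GeneratingFunction} already shows that the number of terms in \eqref{eq:main} (statement (a)) has generating function \eqref{eq:GeneratingFunction}, the equivalence (a)$\Leftrightarrow$(b) is in hand, and it remains to prove (b)$\Leftrightarrow$(c), (b)$\Leftrightarrow$(d), (b)$\Leftrightarrow$(e), and (b)$\Leftrightarrow$(f). In each case I would verify that the sequence with generating function \eqref{eq:GeneratingFunction} satisfies the stated characterization; the converse then comes for free, because each of (c)--(f) together with the initial values $a_0 = a_1 = 1$ determines the sequence uniquely, forcing it to coincide with the one in (b). Throughout I would use the quadratic relation \eqref{eq:MinimumPolynomial}, namely $2G^2 - 5G + 2 + \tfrac{1}{1-x} = 0$, as the most convenient encoding of (b).

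The equivalences (b)$\Leftrightarrow$(c) and (b)$\Leftrightarrow$(f) are pure power-series manipulations. For (c), note that $a_0 = 1$ gives $G - 1 = \sum_{n\ge 1} a_n x^n$, whence $(G-1)^2 = \sum_{n\ge 2}\big(\sum_{m=1}^{n-1} a_m a_{n-m}\big)x^n$; multiplying the recurrence of (c) by $x^n$, summing over $n \ge 2$, and using $\sum_{n\ge 2} x^n = x^2/(1-x)$ yields $G - 1 - x = \tfrac{x^2}{1-x} + 2(G-1)^2$, which rearranges exactly to \eqref{eq:MinimumPolynomial}. For (f), the sequence $\{2^n C_n\}$ has generating function $F(2x)$ with $F$ as in \eqref{eq:CatalanGeneratingFunction}; applying the binomial-transform rule $A(x)\mapsto \tfrac{1}{1-x}A\big(\tfrac{x}{1-x}\big)$ and simplifying (the factors of $1-x$ cancel and $1 - 8\tfrac{x}{1-x} = \tfrac{1-9x}{1-x}$) produces $\tfrac{1}{4x}\big(1 - \sqrt{(1-9x)/(1-x)}\big)$, which is precisely $(G-1)/x = \sum_{n\ge 0} a_{n+1}x^n$.

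For (b)$\Leftrightarrow$(d) I would first convert (b) into a first-order linear ODE. Writing $u = 5 - 4G = \sqrt{(1-9x)/(1-x)}$ and differentiating $u^2 = (1-9x)/(1-x)$ gives the linear equation $(1-9x)(1-x)u' = -4u$, which in terms of $G$ reads $(1 - 10x + 9x^2)G' + 4G = 5$. Extracting the coefficient of $x^{n-1}$ (using $[x^{n-1}]G' = n a_n$) then reproduces exactly the recurrence $n a_n = (10n-14)a_{n-1} + (18-9n)a_{n-2}$ of (d), and the computation run backwards recovers the ODE and hence (b). For (e), I would observe that $(1-n)_k$ vanishes for $k \ge n$, so the series terminates and ${}_2F_1(\tfrac12, 1-n; 2; -8)$ is a well-defined integer for $n \ge 1$; then I would apply Gauss' contiguous relation \eqref{eq:HypergeometricIdentity} with $a = \tfrac12$, $c = 2$, $z = -8$, and $b = 1-n$. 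Since lowering or raising $b$ by one shifts the index, the three hypergeometric terms become $a_{n+1}$, $a_n$, $a_{n-1}$ with coefficients $n+1$, $4-10n$, and $9(n-1)$; after the shift $n\mapsto n-1$ this is precisely the recurrence (d), and matching the initial values closes (e)$\Leftrightarrow$(d)$\Leftrightarrow$(b).

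I expect the main obstacle to be bookkeeping rather than depth: in (c) one must correctly handle the additive $+1$ and the shifted index ranges so that the functional equation lands exactly on \eqref{eq:MinimumPolynomial}, and in (e) the real work is choosing the parameter identification in \eqref{eq:HypergeometricIdentity} so that the contiguous relation coincides with (d) term by term. The derivation of the linear ODE in (d) is the one genuinely non-mechanical idea, since it requires clearing the square root to obtain a relation that is \emph{linear} (rather than quadratic) in $G$; once that ODE is found, everything reduces to extracting coefficients and invoking the uniqueness of solutions of each recurrence under the initial conditions $a_0 = a_1 = 1$.
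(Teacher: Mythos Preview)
Your proposal is correct and uses the same core ingredients as the paper: the functional equation \eqref{eq:MinimumPolynomial} for (c), the first-order linear ODE $(1-10x+9x^2)G'+4G=5$ for (d), Gauss' contiguous relation \eqref{eq:HypergeometricIdentity} with $a=\tfrac12$, $b=1-n$ (after the shift $n\mapsto n-1$) for (e), and the binomial-transform generating-function rule for (f). The organization differs: the paper runs a cycle $(b)\Leftrightarrow(c)$ together with $(b)\Rightarrow(d)\Rightarrow(e)\Rightarrow(f)\Rightarrow(b)$, whereas you use (b) as a hub and close each spoke by the uniqueness of a sequence satisfying the given recurrence with $a_0=a_1=1$. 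The practical effect is that you bypass the paper's direct verification $(e)\Rightarrow(f)$ via the Pochhammer identities \eqref{eq:PochhammerIdentities}, at the modest cost of having to argue each converse by uniqueness. One small point to tighten: the contiguous relation only yields the recurrence for $h_m:={}_2F_1(\tfrac12,1-m;2;-8)$ when all three terms terminate, i.e.\ for $m\ge 3$, so to close $(e)\Leftrightarrow(d)$ you should explicitly check two initial values such as $h_1=1$ and $h_2=3$ rather than relying on $h_0$.
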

\begin{proof}
Clearly (a) holds by definition of the $\{a_n\}$, and (a) $\Longleftrightarrow$ (b) by Proposition \ref{prop:GeneratingFunction}.

``(b) $\Longleftrightarrow$ (c)'': Given that $a_0 = 1$ and $a_1 = 1$, the sequence $\{a_n\}$ has generating function $G$ given by \eqref{eq:GeneratingFunction} if and only if \eqref{eq:MinimumPolynomial} holds, i.e., if and only if
\begin{align*}
 0 & = 2 \left(\sum_{n = 0}^\infty a_n x^n \right)^2 - 5\sum_{n = 0}^\infty a_n x^n + 2 + \sum_{n=0}^\infty x^n\\
   & = \sum_{n = 0}^\infty \left(1 - 5a_n + 2 \sum_{m = 0}^n a_m a_{n-m}\right) x^n + 2\\
   & = \sum_{n = 2}^\infty \left( 1 - a_n + 2\sum_{m = 1}^{n-1} a_m a_{n-m} \right) x^n,
\end{align*}
which happens precisely when $\{a_n\}$ satisfies (c).

``(b) $\Longrightarrow$ (d)'': In the differential ring of formal power series $\ZZ[[x]]$ with derivation $\frac{\text{d}}{\text{d} x}$, the generating function $G$ satisfies the differential equation
\[	 0 = \big(1 - 10x + 9x^2\big)G' + 4G - 5,\]
which can be written in terms of the $\{a_n\}$ as
\begin{align*}
0 = &\ (-5 + 4a_0 + a_1)\cdot 1 + (2 a_2 - 6 a_1)\cdot x \\
    &\ + \sum_{n = 2}^\infty \Big((n+1) a_{n+1} + (4 - 10n) a_n + 9(n-1)a_{n-1} \Big) \cdot x^n.
\end{align*}
Comparing coefficients yields (d).

``(d) $\Longrightarrow$ (e)'': Substituting $a = \frac12, b = 2 - n, c = 2, z = -8$ in Equation \eqref{eq:HypergeometricIdentity}, one finds that $_2F_1\left(\frac 1 2, 1-n; 2; -8\right)$
satisfies the same recurrence relation as $a_n$, and must therefore be equal to $a_n$.

``(e) $\Longrightarrow$ (f)'': The binomial transform $\{b_n\}$ of $\{2^n C_n\}$ is given by
\[ b_n = \sum_{k = 0}^n {n\choose k} 2^k \frac{1}{k+1} {2k\choose k}, \qquad n\geq 0. \]
Using the identities \eqref{eq:PochhammerIdentities} with $a = 1$, one finds
\begin{align*}
b_n & = \sum_{k=0}^n \frac{(2k)!}{k!} \cdot \frac{n!}{(n-k)!} \cdot \frac{1}{(k+1)!} \cdot \frac{2^k}{k!} \\ 
    & = \sum_{k=0}^n \frac{2^{2k} (\frac{1}{2})_k (1)_k }{(1)_k} \cdot (-1)^k (-n)_k \cdot \frac{1}{(2)_k} \cdot \frac{2^k}{k!}\\
    & = \sum_{k=0}^n \frac{(\frac12 )_k (-n)_k}{(2)_k} \, \frac{(-8)^k}{k!} \\
    & = \,_2F_1\left(\frac12, - n; 2; -8\right)\\
    & = a_{n+1}
\end{align*}
for any $n\geq 0$.

``(f) $\Longrightarrow$ (b)'': Since $\{C_n\}$ has generating function $F(x)$ given by \eqref{eq:CatalanGeneratingFunction}, the sequence $\{2^n C_n\}$ has generating function
\[ H(x) := F(2x) = \frac{1 - \sqrt{1 - 8x}}{4x},\]
and its binomial transform $\{a_{n+1}\}_{n\geq 0}$ has generating function
\[ \sum_{n = 0}^\infty a_{n+1} x^n
 = \frac{1}{1-x}H\left(\frac{x}{1-x}\right)
 = \frac{1 - \sqrt{1 - \frac{8x}{1-x}}}{4x}
 = \frac{1 - \sqrt{\frac{1 - 9x}{1 - x}}}{4x}. \]
We conclude that $\{a_n\}$ has generating function
\[ \sum_{n=0}^\infty a_n x^n = 1 + x \left( \frac{1 - \sqrt{\frac{1-9x}{1-x}}}{4x} \right)
 = \frac54 - \frac14 \sqrt{\frac{1-9x}{1-x}} = G(x). \qedhere \]
\end{proof}

We end this paper with a remark on the asymptotic behaviour of the sequence $\{a_n\}$. Flajolet and Sedgewick \cite{FlajoletSedgewick09} describe how the asymptotic properties of a sequence can be deduced from the \emph{singularities} of its generating function, i.e., points where the function ceases to be analytic. The location $\rho$ of the singularity closest to the origin dictates an exponential growth $\rho^{-n}$, while the singularity type determines a subexponential growth factor $C(n)$. Using the dictionary in Figure VI.5 in \cite{FlajoletSedgewick09}, the expansion
\[
G(x) = \frac{5}{4} - \frac{3}{8\sqrt{2}}\sqrt{1 - 9x} + \OOO\left(\Big(x - \frac19 \Big)^{3/2}\right)  \, 
\]
around the singularity at $\rho = \frac19$ yields the asymptotic formula
\begin{equation}\label{eq:AsymptoticFormula}
C(n)\cdot 9^n,\qquad C(n) = \frac{3}{16\sqrt{2\pi}}  n^{-3/2} + \OOO(n^{-5/2}),
\end{equation}
for the coefficients $a_n$. Using Proposition \ref{thm:MainProposition}(d) to quickly calculate the first 1000 entries in $\{a_n\}$, we compare \eqref{eq:AsymptoticFormula} to $a_n$ by plotting the relative error
\[ 1 - \frac{C(n)\cdot 9^n}{a_n}\]
in Figure \ref{fig:Asymptotics}.

\begin{figure}
\begin{center}
\includegraphics[scale=0.93]{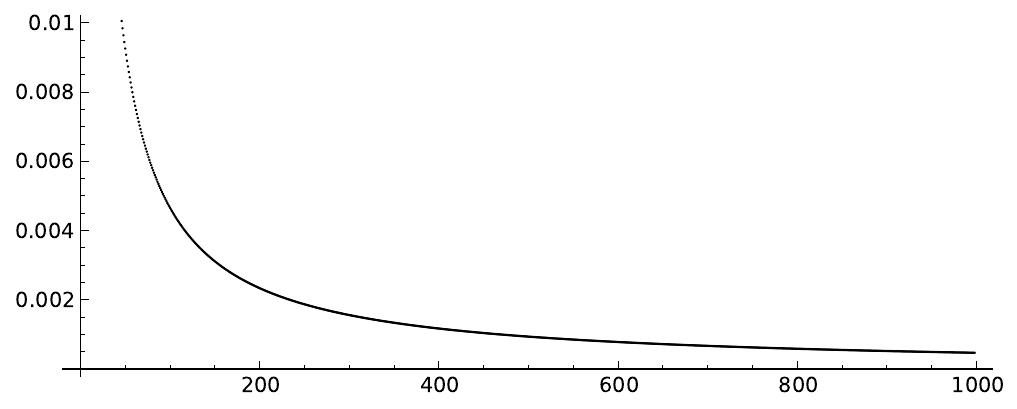}
\end{center}
\caption{The relative error $1 - C(n)\cdot 9^n/a_n$ plotted against $n$.}\label{fig:Asymptotics}
\end{figure}

\section{Acknowledgments}
I am grateful to Paul Kettler, for kindly providing several useful comments on a draft of this paper, and to Paul Fendley and Emeric Deutsch, for conjecturing the generating function $G(x)$ to me in a personal communication.

\bigskip
\hrule
\bigskip

\noindent 2010 {\it Mathematics Subject Classification}:
Primary 05A15; Secondary 26A24.

\bigskip

\noindent \emph{Keywords: } 
divided differences, implicit functions, polygon dissections, generating functions, hypergeometric series, binomial transform.

\bigskip
\hrule
\bigskip

\noindent (Concerned with sequences
\seqnum{A000108}, \seqnum{A001003}, \seqnum{A151374}, \seqnum{A162326}.)

\bigskip
\hrule
\bigskip

%\vspace*{+.1in}
%\noindent
%Received ??;
%revised version received ??.
%Published in {\it Journal of Integer Sequences}, ??.

%\bigskip
%\hrule
%\bigskip

%\noindent
%Return to
%\htmladdnormallink{Journal of Integer Sequences home page}{http://www.cs.uwaterloo.ca/journals/JIS/}.
%\vskip .1in


\begin{thebibliography}{10}
\bibitem{AbramowitzStegun64} M. Abramowitz, I. A. Stegun, \emph{Handbook of Mathematical Functions with Formulas, Graphs, and Mathematical Tables}, National Bureau of Standards Applied Mathematics Series {\bf 55}, For sale by the Superintendent of Documents, U.S. Government Printing Office, Washington, D.C., 1964.

\bibitem{Boor05} C. de Boor, Divided differences, \emph{Surv. Approx. Theory} {\bf 1} (2005), 46--69 (electronic).

\bibitem{Boyadzhiev09} K. N. Boyadzhiev, Harmonic number identities via Euler's transform, \emph{J. Integer Seq.} {\bf 12} (2009).

\bibitem{FendleyHagendorf10} P. Fendley, C. Hagendorf, Exact and simple results for the XYZ and strongly interacting fermion chains, \emph{J. Phys. A: Math. Theor.} {\bf 43} (2010).

\bibitem{FlajoletSedgewick09} P. Flajolet and R. Sedgewick, \emph{Analytic Combinatorics}, Cambridge University Press, 2009.

\bibitem{Floater10} M. S. Floater, A chain rule for multivariate divided differences, \emph{BIT Num. Math.} {\bf 50} (2010), 577--586.

\bibitem{FloaterLyche07} M. S. Floater and T. Lyche, Two chain rules for divided differences and Fa\`a di Bruno's formula, \emph{Math. Comp.} {\bf 76} (2007), 867--877 (electronic).

\bibitem{FloaterLyche08} M. S. Floater and T. Lyche, Divided differences of inverse functions and partitions of a convex polygon, \emph{Math. Comp.} {\bf 77} (2008), 2295--2308.

\bibitem{Gould90} H. W. Gould, Series transformations for finding recurrences for sequences, \emph{Fibonacci Quart.} {\bf 28} (1990), 166--171.

\bibitem{LarcombeWilson01} P. J. Larcombe, P. D. C. Wilson, On the generating function of the Catalan sequence: a historical perspective, \emph{Congr. Numer.} {\bf 149} (2001), 97--108.

\bibitem{Lascoux03} A. Lascoux, \emph{Symmetric Functions and Combinatorial Operators on Polynomials}, CBMS Regional Conference Series in Mathematics {\bf 99}, Published for the Conference Board of the Mathematical Sciences, 2003.

\bibitem{MuntinghFloater11} G. Muntingh and M. S. Floater, Divided differences of implicit functions, \emph{Math. Comp.} {\bf 88} (2011), 2185--2195.

\bibitem{Muntingh11} G. Muntingh, \emph{Topics in Polynomial Interpolation Theory}, Ph.D. Thesis at the Centre of Mathematics for Applications, University of Oslo, 2011.

\bibitem{Muntingh12} G. Muntingh, Divided differences of multivariate implicit functions, \emph{BIT Num. Math.}, {\bf 52} (2012), Pages 703--723.

\bibitem{Rabut00} C. Rabut, Multivariate divided differences with simple knots, \emph{SIAM J. Numer. Anal.} {\bf 38} (2001), 1294--1311.

\bibitem{Roy87} R. Roy, Binomial identities and hypergeometric series, \emph{Amer. Math. Monthly} {\bf 94} (1987), 36--46.

\bibitem{Sauer07} T. Sauer, Degree reducing polynomial interpolation, ideals and divided differences, in \emph{Curve and surface fitting: Avignon 2006}, Mod. Methods Math., Nashboro Press, 2007, pp.\ 220--237.

\bibitem{Sloane12} N. J. A. Sloane et al., \emph{The On-Line Encyclopedia of Integer Sequences}, Published electronically at \texttt{http://oeis.org} (2012).

\bibitem{Stanley97} R. P. Stanley, Hipparchus, Plutarch, Schr\"oder, and Hough, \emph{Amer. Math. Monthly} {\bf 104} (1997), 344--350.

\bibitem{Stanley99} R. P. Stanley, \emph{Enumerative Combinatorics. Vol. 2}, Cambridge Studies in Advanced Mathematics {\bf 62}, Cambridge University Press, 1999.

\bibitem{WangWang06} X.-H. Wang, H.-Y. Wang, On the divided difference form of Fa\`a di Bruno's formula, \emph{J. Comput. Math.} {\bf 24} (2006), 553--560.
\end{thebibliography}
\end{document}